\newtheorem{theoremDutkeySilvestrov}{Theorem} 
\newtheorem{corollaryDutkeySilvestrov}[theoremDutkeySilvestrov]{Corollary}
\newtheorem{lemmaDutkeySilvestrov}[theoremDutkeySilvestrov]{Lemma}
\newtheorem{propositionDutkeySilvestrov}[theoremDutkeySilvestrov]{Proposition}
\newtheorem{definitionDutkeySilvestrov}[theoremDutkeySilvestrov]{Definition}
\newtheorem{remarkDutkeySilvestrov}[theoremDutkeySilvestrov]{Remark}
\newtheorem{exampleDutkeySilvestrov}[theoremDutkeySilvestrov]{Example}
\renewcommand{\theclaimDutkeySilvestrov}{\textup{\theclaimDutkeySilvestrov}}
\def\HDutkaySilv{\mathcal{H}}
\def\KDutkaySilv{\mathcal{K}}
\newcommand{\ipDutkaySilv}[2]{\left\langle #1\, , \,#2\right\rangle}
\newcommand{\brDutkaySilv}{\mathbb{R}}
\newcommand{\bcDutkaySilv}{\mathbb{C}}
\newcommand{\btDutkaySilv}{\mathbb{T}}
\newcommand{\bnDutkaySilv}{\mathbb{N}}
\newcommand{\bzDutkaySilv}{\mathbb{Z}}
\begin{document}

\title[Reducibility of the wavelet representation]{Reducibility of the wavelet representation associated to the Cantor set}
\author{Dorin Ervin Dutkay}
\address{[Dorin Ervin Dutkay] University of Central Florida\\
	Department of Mathematics\\
	4000 Central Florida Blvd.\\
	P.O. Box 161364\\
	Orlando, FL 32816-1364\\
U.S.A.\\} \email{ddutkay@mail.ucf.edu}
\author{Sergei Silvestrov}
\address{[Sergei Silvestrov] Centre for Mathematical Sciences\\
Lund University\\
Box 118, SE-221 00 Lund, Sweden}
\email{ssilvest@maths.lth.se}
\thanks{Research supported in part by The Swedish Foundation for International Cooperation in Research and Higher Education
(STINT), The Swedish Research Council, The Swedish Royal Academy of Sciences and The Crafoord Foundation. }
\subjclass[2000]{42C40 ,28D05,47A67,28A80}
\keywords{wavelet representation, quadrature mirror filter, cantor set}

\begin{abstract}
We answer a question by Judith Packer about the irreducibility of the wavelet representation associated to the Cantor set. We prove that if the QMF filter does not have constant absolute value, then the wavelet representations is reducible.
\end{abstract}

\maketitle


\section{Introduction}
Wavelet representations were introduced in \cite{Jor01b,Dut02,DuJo07} in an attempt to apply the multiresolution techniques of wavelet theory \cite{Dau92} to a larger class of problems where self-similarity, or refinement is the central phenomenon. They were used to construct wavelet bases and multiresolutions on fractal measures and Cantor sets \cite{DuJo06b} or on solenoids \cite{Dut06}.

Wavelet representations can be defined axiomatically as follows: let $X$ be a compact metric space and let $r:X\rightarrow X$ be a Borel measurable function which is onto and finite-to-one, i.e., $0<\#r^{-1}(x)<\infty$ for all $x\in X$. Let $\mu$ be a {\it strongly invariant measure} on $X$, i.e.
\begin{equation}
\int_X f\,d\mu=\int_X\frac{1}{\#r^{-1}(x)}\sum_{r(y)=x}f(y)\,d\mu(x),\quad(f\in L^\infty(X))
\label{eqsi}
\end{equation}

Let $m_0\in L^\infty(X)$ be a {\it QMF filter}, i.e.,
\begin{equation}
\frac{1}{\#r^{-1}(x)}\sum_{r(y)=x}|m_0(y)|^2=1\mbox{ for $\mu$-a.e. $x\in X$}
\label{eqqmf}
\end{equation}

\begin{theoremDutkeySilvestrov}\label{thi1}\cite{DuJo07}
There exists a Hilbert space $\HDutkaySilv$, a unitary operator $U$ on $\HDutkaySilv$, a representation $\pi$ of $L^\infty(X)$ on $\HDutkaySilv$ and an element $\varphi$ of $\HDutkaySilv$ such that
\begin{enumerate}
\item
(Covariance) $U\pi(f)U^{-1}=\pi(f\circ r)$ for all $f\in L^\infty(X)$.
\item (Scaling equation) $U\varphi=\pi(m_0)\varphi$
\item (Orthogonality) $\ipDutkaySilv{\pi(f)\varphi}{\varphi}=\int f\,d\mu$ for all $f\in L^\infty(X)$.
\item (Density) $\{U^{-n}\pi(f)\varphi\,|\,n\in\bnDutkaySilv, f\in L^\infty(X)\}$ is dense in $\HDutkaySilv$.

\end{enumerate}
Moreover they are unique up to isomorphism.

\end{theoremDutkeySilvestrov}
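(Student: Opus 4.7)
My plan is an inductive-limit (or dilation) construction, followed by a GNS-type uniqueness argument. First, check that $Sf:=m_0\cdot(f\circ r)$ defines an isometry of $L^2(X,\mu)$: writing $\|Sf\|_2^2=\int|m_0|^2|f\circ r|^2\,d\mu$ and applying strong invariance \eqref{eqsi} to this integrand, the fibre sum collapses via the QMF identity \eqref{eqqmf} to $\int|f|^2\,d\mu$. The space $\mathcal{H}$ is then defined as the Hilbert-space direct limit of the tower
\[
L^2(X,\mu)\stackrel{S}{\longrightarrow}L^2(X,\mu)\stackrel{S}{\longrightarrow}L^2(X,\mu)\stackrel{S}{\longrightarrow}\cdots,
\]
together with isometric embeddings $i_n\colon L^2(X,\mu)\hookrightarrow\mathcal{H}$ satisfying $i_n=i_{n+1}\circ S$ and with $\bigcup_{n\ge 0}i_n(L^2(X,\mu))$ dense.

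Set $\varphi:=i_0(1)$. Define $U$ on the dense subspace $\bigcup_n i_n(L^2(X,\mu))$ by $U^{-1}i_n(f):=i_{n+1}(f)$; this is well-defined and isometric by construction, and the identity $i_n(f)=i_{n+1}(Sf)=U^{-1}i_n(Sf)$ shows it has dense range, so $U$ extends uniquely to a unitary on $\mathcal{H}$. The correct level-$n$ formula for the representation turns out to be
\[
\pi(g)\,i_n(f):=i_n\bigl((g\circ r^n)\,f\bigr),
\]
whose compatibility with $i_n=i_{n+1}\circ S$ rests on the identity $S\bigl((g\circ r^n)f\bigr)=m_0(g\circ r^{n+1})(f\circ r)=(g\circ r^{n+1})\,Sf$. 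With these definitions the four axioms are one-line verifications: covariance from $U\pi(g)U^{-1}i_n(f)=i_n((g\circ r^{n+1})f)=\pi(g\circ r)i_n(f)$; scaling from $U\varphi=i_0(m_0)=\pi(m_0)\varphi$; orthogonality from $\langle\pi(g)\varphi,\varphi\rangle=\langle g,1\rangle_{L^2(X,\mu)}=\int g\,d\mu$; density because $U^{-n}\pi(f)\varphi=i_n(f)$ and $L^\infty(X)$ is norm-dense in $L^2(X,\mu)$.

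For uniqueness, given another quadruple $(\mathcal{H}',U',\pi',\varphi')$ with properties (1)--(4), my key observation is that properties (1)--(3) alone determine every inner product of the form $\langle U'^{-n}\pi'(f)\varphi',U'^{-m}\pi'(g)\varphi'\rangle$. Using the covariance relation $U^k\pi(h)=\pi(h\circ r^k)U^k$ to move unitaries to one side, and the iterated scaling identity $U^k\varphi=\pi\bigl(\prod_{j=0}^{k-1}m_0\circ r^j\bigr)\varphi$ to absorb remaining powers of $U$ into the representation, each such inner product collapses to a single $\int h\,d\mu$ for an explicit $h\in L^\infty(X)$ depending only on $f$, $g$, $m_0$, $n$, $m$. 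The analogous reduction holds in $\mathcal{H}$, so the densely-defined map $U^{-n}\pi(f)\varphi\mapsto U'^{-n}\pi'(f)\varphi'$ is isometric on the spanning set of property~(4) and extends to a unitary intertwiner. The main obstacle is precisely this bookkeeping: one must verify that the reduction to $\int h\,d\mu$ is independent of how the covariance/scaling moves are applied. All other steps are essentially routine once the correct level-$n$ formula $\pi(g)i_n(f)=i_n((g\circ r^n)f)$ has been identified; naive multiplication by $g$ at every level is \emph{not} compatible with the embeddings $i_n=i_{n+1}\circ S$, and overlooking this is the main pitfall of the construction.
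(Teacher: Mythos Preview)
Your proof is correct. Note, however, that the present paper does not itself prove this theorem; it is quoted from \cite{DuJo07}. What the paper does recall (inside the proof of Theorem~\ref{th1}) is the concrete \emph{solenoid} realization from that reference: one forms the inverse-limit space $X_\infty=\{(x_0,x_1,\dots):r(x_{n+1})=x_n\}$, equips it with a measure $\mu_\infty$ built from path measures $P_x$ having transition weights $W(y)=|m_0(y)|^2/\#r^{-1}(r(y))$, and takes $\mathcal H=L^2(X_\infty,\mu_\infty)$ with $U\xi=(m_0\circ\theta_0)\,\xi\circ r_\infty$, $\pi(f)$ given by multiplication by $f\circ\theta_0$, and $\varphi=1$.

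Your inductive-limit construction is genuinely different in flavour: it is purely Hilbert-space theoretic and never identifies $\mathcal H$ with an $L^2$-space over a concrete dynamical system. It has the advantage of working without the non-singularity hypothesis $\mu(\{m_0=0\})=0$ that the solenoid model requires (Lemma~\ref{thi5}), and the uniqueness argument you give is the natural GNS-style one. The solenoid realization, on the other hand, is what makes the rest of the paper go: the proof of reducibility relies on identifying the commutant with $r_\infty$-invariant functions on $X_\infty$ and on applying Birkhoff's ergodic theorem and Borel--Cantelli to the explicit system $(X_\infty,r_\infty,\mu_\infty)$. So your approach yields a clean abstract existence/uniqueness proof, while the paper's cited construction supplies the dynamical model on which the main results depend.
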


\begin{definitionDutkeySilvestrov}\label{defi3}
We say that $(\HDutkaySilv,U,\pi,\varphi)$ in Theorem \ref{thi1} is the {\it wavelet representation} associated to $m_0$.
\end{definitionDutkeySilvestrov}

Our main focus will be the irreducibility of the wavelet representations.

The most familiar wavelet representation is the classical one on $L^2(\brDutkaySilv)$, where $U$ is the operator of dilation by 2, and $\pi$ is obtained by applying the Borel functional calculus to the translation operator $T$, i.e. $\pi(f)=f(T)$ for $f$ bounded function on $\btDutkaySilv$ - the unit circle. This representation is associated to the map $r(z)=z^2$ on $\btDutkaySilv$, the measure $\mu$ is just the Haar measure on the circle, and $m_0$ can be any low-pass QMF filter which produces an orthogonal scaling function (see \cite{Dau92}). For example, one can take the Haar filter $m_0(z)=(1+z)/\sqrt2$ which produces the Haar scaling function $\varphi$.

This representation is reducible; its commutant was computed in \cite{HaLa00} and the direct integral decomposition was presented in \cite{LPT01}.

Some low-pass filters, such as the stretched Haar filter $m_0(z)=(1+z^3)/\sqrt2$ give rise to non-orthogonal scaling functions. In this case super-wavelets appear, and the wavelet representation is realized on a direct sum of finitely many copies of $L^2(\brDutkaySilv)$. See \cite{BDP05}. This representation is also reducible and its direct integral decomposition is similar to the one for $L^2(\brDutkaySilv)$. See \cite{BDP05,Dut06}.

When one takes the QMF filter $m_0=1$ the situation is very different. As shown in \cite{Dut06}, the representation can be realized on a solenoid and in this case it is irreducible. The result holds even for more general maps $r$, if they are ergodic (see \cite{DLS09}).

The general theory of the decomposition of wavelet representations into irreducible components was given in \cite{Dut06}, but there is a large class of examples where it is not known wheather these representations are irreducible or not.

One interesting example, introduced in \cite{DuJo07b}, is the following: take the map $r(z)=z^3$ on the unit circle $\btDutkaySilv$ with the Haar measure $\mu$. Consider the QMF filter $m_0(z)=(1+z^2)/\sqrt2$. The wavelet representation associated to this data is strongly connected to the middle-third Cantor set. It can be realized as follows:

Let $\mathbf C$ be the middle-third Cantor set. Let
$$\mathcal R:=\bigcup\left\{\mathbf C+\frac{k}{3^n}\,|\,k,n\in\bzDutkaySilv\right\}.$$
Let $\HDutkaySilv^s$ be the Hausdorff measure of dimension $s:=\log_32$, i.e., the Hausdorff dimension of the Cantor set. Restrict $\HDutkaySilv^s$ to the the set $\mathcal R$. Consider the Hilbert space $\HDutkaySilv:=L^2(\mathcal R,\HDutkaySilv^s)$. Define the unitary operators on $\HDutkaySilv$:
$$Uf(x)=\frac{1}{\sqrt2}f\left(\frac x3\right),\quad Tf(x)=f(x-1)$$
and define the representation $\pi$ of $L^\infty(\btDutkaySilv)$ on $\HDutkaySilv$, by applying Borel functional calculus to the operator $T$: $\pi(f)=f(T)$ for $f\in L^\infty(X)$.

The scaling function is defined as the characteristic function of the Cantor set $\varphi:=\chi_{\mathbf C}$.

Then $(\HDutkaySilv,U,\pi,\varphi)$ is the wavelet representation associated to the QMF filter $m_0(z)=(1+z^2)/\sqrt2$.

In February 2009, at the FL-IA-CO-OK workshop in Iowa City, following investigations into general multiresolution theories \cite{bfmp2,bfmp1,aijln,ijkln}, Judith Packer formulated the following question: is this representation irreducible?
We will answer this question here, and show that the representation is {\it not} irreducible. Indeed, we show that $m_0=1$ is an exception, and, under some mild assumptions, all the other QMF filters give rise to reducible representations.

In \cite{DLS09}, several equivalent forms of this problem were presented in terms of refinement equations, fixed points of transfer operators or ergodic shifts on solenoids. Using the results in \cite{DLS09} we obtain as a corollary non-trivial solutions to all these problems.

\section{Main Result}
\begin{theoremDutkeySilvestrov}\label{th1}
Suppose $r:(X,\mu)\rightarrow (X,\mu)$ is ergodic. Assume $|m_0|$ is not constant $1$ $\mu$-a.e., non-singular, i.e., $\mu(m_0(x)=0)=0$, and $\log |m_0|^2$ is in $L^1(X)$. Then the wavelet representation $(\HDutkaySilv,U,\pi,\varphi)$ is reducible.
\end{theoremDutkeySilvestrov}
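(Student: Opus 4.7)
The plan is to pass to the solenoid realization of the wavelet representation (as in \cite{Dut06}) and exhibit a non-constant invariant function for the associated shift, which will produce a non-scalar element of the commutant. Setting $X_\infty=\{(x_n)_{n\ge 0}:r(x_{n+1})=x_n\}$ and letting $\hat\mu$ be the measure on $X_\infty$ determined on cylinders by
\[
\int F\,d\hat\mu=\int_X\sum_{r(x_1)=x_0}\cdots\sum_{r(x_n)=x_{n-1}}F(x_0,\ldots,x_n)\prod_{k=1}^n\frac{|m_0(x_k)|^2}{\#r^{-1}(x_{k-1})}\,d\mu(x_0),
\]
one identifies $\HDutkaySilv\cong L^2(X_\infty,\hat\mu)$ so that $\pi(f)$ is multiplication by $f(x_0)$, $\varphi\equiv 1$, and $(Uf)(x_0,x_1,\ldots)=m_0(x_0)\,f(r(x_0),x_0,x_1,\ldots)$. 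A direct computation (using covariance and the maximal abelianness of $L^\infty(X_\infty,\hat\mu)$) then identifies the commutant of $\{U\}\cup\pi(L^\infty(X))$ with the algebra of multiplication operators by bounded $\sigma$-invariant functions, where $\sigma(x_0,x_1,\ldots)=(r(x_0),x_0,x_1,\ldots)$. Thus reducibility is equivalent to exhibiting a non-constant $\alpha\in L^\infty(X_\infty,\hat\mu)$ with $\alpha\circ\sigma=\alpha$ $\hat\mu$-a.e.

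A preliminary ingredient is the strict inequality $\int_X\log|m_0|^2\,d\mu<0$. By strong invariance applied to $\log|m_0|^2$ and Jensen's inequality applied fiberwise to the probability measures $\nu_x=\frac{1}{\#r^{-1}(x)}\sum_{r(y)=x}\delta_y$,
\[
\int_X\log|m_0|^2\,d\mu=\int_X\!\Bigl(\int\log|m_0|^2\,d\nu_x\Bigr)d\mu(x)\le\int_X\log\!\Bigl(\int|m_0|^2\,d\nu_x\Bigr)d\mu(x)=0,
\]
the last equality being the QMF condition. Strict inequality holds because $|m_0|^2$ is not $\mu$-a.e.\ constant on $r$-fibers (otherwise QMF forces $|m_0|\equiv 1$, against the hypothesis). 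Setting $\alpha_\ast=-\int\log|m_0|^2\,d\mu>0$, the integrability $\log|m_0|^2\in L^1(\mu)$ and ergodicity of $r$ give, via Birkhoff's theorem on $(X,\mu,r)$, $\frac{1}{n}\sum_{k=0}^{n-1}\log|m_0(r^k x)|^2\to-\alpha_\ast$ for $\mu$-a.e.\ $x\in X$, so that $\prod_{k=0}^{n-1}|m_0(r^k x)|^2\to 0$ exponentially.

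The main step, and where the principal obstacle lies, is to promote this $\mu$-a.e.\ asymptotic on $X$ to a non-constant $\sigma$-invariant measurable function on $X_\infty$. The inverse shift $\sigma^{-1}(x_0,x_1,\ldots)=(x_1,x_2,\ldots)$ is non-singular for $\hat\mu$ with Radon-Nikodym derivative $|m_0(x_0)|^2$ (non-singularity being guaranteed by $\mu(m_0=0)=0$), whose logarithm lies in $L^1(\hat\mu)$. A suitable pointwise ergodic theorem for non-singular transformations (of Hopf or Chacon--Ornstein type) applied to the partial sums $S_n(x)=\sum_{k=0}^{n-1}\log|m_0(x_k)|^2$ along $\sigma^{-1}$-orbits should yield a $\hat\mu$-a.e.\ limit function $H$, which is $\sigma$-invariant by the coboundary computation $S_n\circ\sigma-S_n=\log|m_0(r(x_0))|^2-\log|m_0(x_{n-1})|^2$. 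The hardest technical point is to verify that $H$ is not $\hat\mu$-essentially constant. My intended approach is a branch-selection argument: since $\hat\mu$-typical sequences are obtained by iteratively selecting $r$-preimages with $|m_0|^2$-weights, the assumption $\mu(m_0=0)=0$ guarantees every prescribed asymptotic branch-frequency pattern carries positive $\hat\mu$-measure, while the non-constancy of $|m_0|$ produces two such patterns with distinct empirical averages of $\log|m_0|^2$, witnessing disjoint level sets of $H$. The sublevel set $\{H<c\}$ for appropriate $c\in(\inf H,\sup H)$ then supplies the required non-trivial $\sigma$-invariant set, equivalently a non-constant bounded $\sigma$-invariant function, completing the proof of reducibility.
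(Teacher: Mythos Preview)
Your setup --- the solenoid realization, the identification of the commutant with multiplication by $\sigma$-invariant $L^\infty$-functions, and the Jensen/Birkhoff argument giving $\int_X\log|m_0|^2\,d\mu<0$ with exponential decay of $\prod_{k<n}|m_0(r^kx)|^2$ for $\mu$-a.e.\ $x$ --- matches the paper exactly. The divergence is at the main step, and there your argument has a genuine gap.

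The problem is the branch-selection argument for the non-constancy of $H$. The assertion that ``every prescribed asymptotic branch-frequency pattern carries positive $\hat\mu$-measure'' does not follow from $\mu(m_0=0)=0$: non-singularity guarantees that every \emph{finite} cylinder has positive measure, but says nothing about tail events. In a Markov chain with strictly positive transition weights, the set of paths with a prescribed non-typical asymptotic frequency generically has measure zero. What you are really asserting is that two disjoint $\sigma$-invariant tail sets both have positive $\hat\mu$-measure --- and that is precisely the non-ergodicity you are trying to prove, so the argument is circular. (There is also a looser issue: Hopf/Chacon--Ornstein theorems yield ratio limits for non-singular transformations, not Ces\`aro limits, so even the existence of $H=\lim\frac{1}{n}S_n$ $\hat\mu$-a.e.\ is not justified by the tools you name.)

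The paper's mechanism is different and supplies the missing idea. From the $\mu$-a.e.\ exponential decay you already have, Egorov produces a set $A_0\subset X$ with $\mu(A_0)>0$ on which $\prod_{k<m}|m_0(r^kx)|^2\le b^m$ for $m\ge n_0$ and some $b<1$. A direct computation using strong invariance gives
\[
\hat\mu\bigl(\{z:z_m\in A_0\}\bigr)=\int_{A_0}\prod_{k<m}|m_0(r^kx)|^2\,d\mu(x),
\]
which is therefore summable in $m$. Borel--Cantelli then forces $z_m\in A_0$ only finitely often for $\hat\mu$-a.e.\ $z$. Now argue by contradiction: if the representation were irreducible, $\sigma$ would be ergodic on $(X_\infty,\hat\mu)$, and the ergodic theorem applied to $\chi_{A_0}\circ\theta_0$ along the backward orbit $\sigma^{-k}$ would give $\frac{1}{n}\sum_{k<n}\chi_{A_0}(z_k)\to\mu(A_0)>0$, contradicting the Borel--Cantelli conclusion that this sum is bounded. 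The Egorov/Borel--Cantelli step is exactly what converts the information on $(X,\mu)$ into a hard obstruction on $(X_\infty,\hat\mu)$, replacing your attempt to build the invariant function directly.
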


\begin{proof}

We recall some facts from \cite{DuJo07}. The wavelet represntation can be realized on a solenoid as follows:
Let
\begin{equation}
X_\infty:=\left\{(x_0,x_1,\dots)\in X^{\bnDutkaySilv}\,|\, r(x_{n+1})=x_n\mbox{ for all }n\geq 0\right\}
\label{eqi4_1}
\end{equation}
We call $X_\infty$ the {\it solenoid} associated to the map $r$.

On $X_\infty$ consider the $\sigma$-algebra generated by cylinder sets.
Define the map $r_\infty:X_\infty\rightarrow X_\infty$ as follows
\begin{equation}
r_\infty(x_0,x_1,\dots)=(r(x_0),x_0,x_1,\dots)\mbox{ for all }(x_0,x_1,\dots)\in X_\infty
\label{eqi4_2}
\end{equation}
Then $r_\infty$ is a measurable automorphism on $X_\infty$.

Define $\theta_0:X_\infty\rightarrow X$,
\begin{equation}
\theta_0(x_0,x_1,\dots)=x_0.
\label{eqi4_3}
\end{equation}
The measure $\mu_\infty$ on $X_\infty$ will be defined by constructing some path measures $P_x$ on the fibers $\Omega_x:=\{(x_0,x_1,\dots)\in X_\infty\,|\, x_0=x\}.$

Let
$$c(x):=\#r^{-1}(r(x)),\quad W(x)=|m_0(x)|^2/c(x),\quad(x\in X).$$
Then
\begin{equation}
\sum_{r(y)=x}W(y)=1,\quad(x\in X)
\label{eqi4_4}
\end{equation}
$W(y)$ can be thought of as the trasition probability from $x=r(y)$ to one of its roots $y$.

For $x\in X$, the path measure $P_x$ on $\Omega_x$ is defined on cylinder sets by
\begin{equation}
P_x(\{(x_n)_{n\geq0}\in\Omega_x\,|\, x_1=z_1,\dots,x_n=z_n\})=W(z_1)\dots W(z_n)
\label{eqi4_5}
\end{equation}
for any $z_1,\dots, z_n\in X$.

This value can be interpreted as the probability of the random walk to go from $x$ to $z_n$ through the points $x_1,\dots,x_n$.

Next, define the measure $\mu_\infty$ on $X_\infty$ by
\begin{equation}
\int f\,d\mu_\infty=\int_{X}\int_{\Omega_x}f(x,x_1,\dots)\,dP_{x}(x,x_1,\dots)\,d\mu(x)
\label{eqi4_7}
\end{equation}
for bounded measurable functions on $X_\infty$.

Consider now the Hilbert space $\HDutkaySilv:=L^2(\mu_\infty)$. Define the operator
\begin{equation}
U\xi=(m_0\circ\theta_0)\,\xi\circ r_\infty,\quad(\xi\in L^2(X_\infty,\mu_\infty))
\label{eqi4_8}
\end{equation}

Define the representation of $L^\infty(X)$ on $\HDutkaySilv$
\begin{equation}
\pi(f)\xi=(f\circ\theta_0)\,\xi,\quad(f\in L^\infty(X),\xi\in L^2(X_\infty,\mu_\infty))
\label{eqi4_9}
\end{equation}

Let $\varphi=1$ the constant function $1$.

\begin{lemmaDutkeySilvestrov}\label{thi5}\cite{DuJo07}
Suppose $m_0$ is non-singular, i.e., $\mu(\{x\in X\,|\, m_0(x)=0\})=0$. Then the data $(\HDutkaySilv,U,\pi,\varphi)$ forms the wavelet representation associated to $m_0$.
\end{lemmaDutkeySilvestrov}

We proceed to the proof of our main result.

From the QMF relation and the strong invariance of $\mu$ we have
$$\int_X|m_0|^2\,d\mu=\int_X\frac{1}{\# r^{-1}(x)}\sum_{r(y)=x}|m_0(y)|^2\,d\mu=1.$$

By Jensen's inequality we have

$$a:=\int_X \log |m_0|^2\,d\mu\leq \log\int_X|m_0|^2\,d\mu=0.$$
Since $\log$ is strictly concave, and $|m_0|^2$ is not constant $\mu$-a.e., it follows that the inequality is strict, and $a<0$.

Since $r$ is ergodic, applying Birkoff's ergodic theorem, we obtain that
$$\lim_{n\rightarrow\infty}\frac{1}{n}\sum_{k=0}^{n-1}\log|m_0\circ r^k|^2=\int_X\log|m_0|^2\,d\mu=a,\,\mu-\mbox{ a.e.}$$
This implies that
$$\lim_{n\rightarrow\infty}\left(|m_0(x)m_0(r(x))\dots m_0(r^{n-1}(x))|^2\right)^{1/n}=e^a<1,\,\mu-\mbox{a.e.}$$
Take $b$ with $e^a<b<1$.

By Egorov's theorem, there exists a measurable set $A_0$, with $\mu_\infty(A_0)>0$, such that $(|m_0(x)m_0(r(x))\dots m_0(r^{n-1}(x))|^2)^{1/n}$ converges uniformly to $e^a$ on $A_0$. This implies that there exists an $n_0$ such for all $m\geq n_0$:
$$\left(|m_0(x)m_0(r(x))\dots m_0(r^{m-1}(x))|^2\right)^{1/m}\leq b\mbox{ for }x\in A_0$$
so
\begin{equation}
|m_0(x)m_0(r(x))\dots m_0(r^{m-1}(x))|^2\leq b^m,\mbox{ for $m\geq n_0$ and all $x\in A_0$.}
\label{eqt1}
\end{equation}

Next, given $m\in\bnDutkaySilv$, we compute the probability of a sequence $(z_n)_{n\in\bnDutkaySilv}\in X_\infty$ to have $z_m\in A_0$. We have, using the strong invariance of $\mu$:
$$P(z_m\in A_0)=\mu_\infty\left(\{(z_n)_n\,|\, z_m\in A_0\}\right)=\int_{X_\infty}\chi_{A_0}\circ\theta_m\,d\mu_\infty$$
$$=\int_X\frac{1}{\#r^{-m}(z_0)}\sum_{r(z_1)=z_0,\dots, r(z_m)=z_{m-1}}|m_0(z_1)|^2\dots |m_0(z_m)|^2\chi_{A_0}(z_m)\,d\mu(z_0)$$
$$=\int_X|m_0(z_m)m_0(r(z_m))\dots m_0(r^{m-1}(z_m))|^2\chi_{A_0}(z_m)\,d\mu(z_m)$$
$$=\int_X|m_0(x)m_0(r(x))\dots m_0(r^{m-1}(x))|^2\chi_{A_0}(x)\,d\mu(x).$$

Then
$$\sum_{m=1}^\infty P(z_m\in A_0)=\sum_{m\geq1}\int_X |m_0(x)m_0(r(x))\dots m_0(r^{m-1}(x))|^2\chi_{A_0}\,d\mu(x)<\infty$$
and we used \eqref{eqt1} in the last inequality.

Now we can use Borel-Cantelli's lemma, to conclude that the probability that $z_m\in A_0$ infinitely often is zero. Thus, for $\mu_\infty$-a.e. $z:=(z_n)_n$, there exists $k_z$ (depending on the point) such that $z_n\not\in A_0$ for $n\geq k_z$.

Suppose now the representation is irreducible. Then $r_\infty$ is ergodic on $(X_\infty,\mu_\infty)$. So $r_\infty^{-1}$ is too. Using Birkhoff's ergodic theorem it follows that, $\mu_\infty$-a.e.,
\begin{equation}
\lim_{n\rightarrow\infty}\frac{1}{n}\sum_{k=0}^{n-1}(\chi_{A_0}\circ\theta_0)\circ r_\infty^{-k}=\int_{X_\infty}\chi_{A_0}\circ\theta_0\,d\mu_\infty=\mu(A_0)>0
\label{eqt2}
\end{equation}

But
$$\left[(\chi_{A_0}\circ\theta_0)\circ r_\infty^{-k}\right](z_n)_n=\chi_{A_0}(z_k)=0,\mbox{ for $k\geq k_z$}.$$
Therefore the sum on the left of \eqref{eqt2} is bounded by $k_z$ so the limit is zero, a contradiction. Thus the representation has to be reducible.
\end{proof}

Using the results from \cite{DLS09}, we obtain that there are non-trivial solutions to refinement equations and non-trivial fixed points for transfer operators:

\begin{corollaryDutkeySilvestrov}\label{cor2}
Let $m_0$ be as in Theorem \ref{th1} and let $(\HDutkaySilv,U,\pi,\varphi)$ be the associated wavelet representation. Then

\begin{enumerate}
\item
There exist solutions $\varphi'\in\HDutkaySilv$ for the scaling equation $U\varphi'=\pi(m_0)\varphi'$ which are not constant multiples of $\varphi$.
\item
There exist non-constant, bounded fixed points for the transfer operator
$$R_{m_0}f(x)=\frac{1}{\#r^{-1}(x)}\sum_{r(y)=x}|m_0(y)|^2f(y),\quad(f\in L^\infty(X),x\in X).$$
\end{enumerate}
\end{corollaryDutkeySilvestrov}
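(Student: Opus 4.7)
The plan is to derive both statements as immediate consequences of Theorem \ref{th1}, using the dictionary of equivalences set up in \cite{DLS09}. That paper translates irreducibility of the wavelet representation into (i) the uniqueness up to scalars of $L^2$-solutions of the scaling equation $U\varphi' = \pi(m_0)\varphi'$, and (ii) the triviality of the cone of bounded fixed points of the transfer operator $R_{m_0}$. Read contrapositively, reducibility — which we have already proved under our hypotheses — forces non-trivial scaling vectors and non-constant bounded $R_{m_0}$-harmonic functions.

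For part (1), I would produce $\varphi'$ directly from the commutant. Since the representation is reducible, there is a non-scalar operator $A\in\{U,\pi(L^\infty(X))\}'$. Using the covariance relation together with the scaling equation $U\varphi = \pi(m_0)\varphi$, one gets
\[ U(A\varphi) \;=\; A\,U\varphi \;=\; A\,\pi(m_0)\varphi \;=\; \pi(m_0)\,(A\varphi),\]
so $\varphi' := A\varphi$ is another scaling vector. If $\varphi'$ were a scalar multiple $c\varphi$, then $A$ and $cI$ would agree on the cyclic vector $\varphi$; covariance together with the density statement in Theorem \ref{thi1}(4) propagates this equality to a dense subspace, contradicting the choice of $A$ as non-scalar.

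For part (2), I would use the correspondence in \cite{DLS09} between bounded $R_{m_0}$-harmonic functions and multiplication operators in the commutant on the solenoid model $L^2(X_\infty,\mu_\infty)$: each bounded fixed point $h$ of $R_{m_0}$ lifts along the sequence $(h\circ\theta_n)_n$ to a bounded martingale on $X_\infty$ whose pointwise limit $\tilde h$ is invariant under $r_\infty$ and satisfies $M_{\tilde h}\in\{U,\pi\}'$. If every bounded fixed point of $R_{m_0}$ were constant, then every such $\tilde h$ would be constant, the commutant would reduce to scalars, and the representation would be irreducible — contradicting Theorem \ref{th1}. Thus non-constant bounded fixed points must exist.

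The only real obstacle is bookkeeping: one must verify that the hypotheses of Theorem \ref{th1} (ergodicity of $r$, non-singularity of $m_0$, and $\log|m_0|^2\in L^1$) match the standing assumptions under which the equivalences in \cite{DLS09} are proved. These are precisely the hypotheses used there, so invoking those results is direct, and the corollary follows at once from Theorem \ref{th1}.
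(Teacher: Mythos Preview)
Your proposal is correct and follows essentially the same route as the paper: the corollary is stated as an immediate consequence of Theorem~\ref{th1} together with the equivalences established in \cite{DLS09}, and the paper gives no further argument beyond that citation. Your write-up simply unpacks those equivalences more explicitly---in particular your direct commutant argument for part~(1) via $\varphi'=A\varphi$ and cyclicity is a nice, self-contained way to see that statement without appealing to the full machinery of \cite{DLS09}.
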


\begin{remarkDutkeySilvestrov}\label{rem2.4}
As shown in \cite{DuJo07}, operators in the commutant of $\{U, \pi\}$ are multiplication operators $M_g$, with $g\in L^\infty(X_\infty,r_\infty)$ and $g=g\circ r_\infty$. Therefore, if $\KDutkaySilv$ is a subspace which is invariant for $U$ and $\pi(f)$ for all $f\in L^\infty(X)$, then the orthogonal projection onto $\KDutkaySilv$ is an operator in the commutant and so it corresponds to a multiplication by a characteristic function $\chi_A$, where $A$ is an invariant set for $r_\infty$, i.e.,
$A=r_\infty^{-1}(A)=r_\infty(A)$, $\mu_\infty$-a.e., and $\KDutkaySilv=L^2(A,\mu_\infty)$

In conclusion the study of invariant spaces for the wavelet representation $\{U,\pi\}$ is equivalent to the study of the invariant sets for the dynamical system $
r_\infty$ on $(X_\infty,\mu_\infty)$.

\end{remarkDutkeySilvestrov}

\begin{propositionDutkeySilvestrov}\label{pr2.5}
Under the assumptions of Theorem \ref{th1}, there are no finite dimensional invariant subspaces for the wavelet representation.
\end{propositionDutkeySilvestrov}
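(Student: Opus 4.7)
The plan is to argue by contradiction: assume a nonzero finite-dimensional subspace $\KDutkaySilv \subseteq \HDutkaySilv$ is invariant under both $U$ and $\pi$, extract from it an atom of $\mu$ sitting on an $r$-periodic point, and then show that strong invariance, ergodicity, and the QMF relation combine to force $|m_0| = 1$ $\mu$-a.e., contradicting the hypothesis of Theorem \ref{th1}.

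The first step is to invoke Remark \ref{rem2.4}, which identifies $\KDutkaySilv$ with $L^2(A,\mu_\infty)$ for some $r_\infty$-invariant set $A \subseteq X_\infty$. Since $\dim \KDutkaySilv < \infty$, the measure $\mu_\infty|_A$ must be purely atomic with finitely many atoms, and the bi-measurable bijection $r_\infty$ must permute them. Picking any such atom $z = (z_0, z_1, \ldots)$ with $r_\infty^N(z) = z$, the explicit formula
\[ r_\infty^N(z_0, z_1, \ldots) = (r^N(z_0), r^{N-1}(z_0), \ldots, r(z_0), z_0, z_1, \ldots) \]
forces $r^N(z_0) = z_0$, so $z_0$ is $r$-periodic. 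From the definition of $\mu_\infty$ via the path measures $P_x$ one checks $\mu_\infty(\Omega_{z_0}) = \mu(\{z_0\})$, so $\mu_\infty(\{z\}) > 0$ propagates up to $\mu(\{z_0\}) > 0$: an atom of $\mu$ at an $r$-periodic point.

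The second step is to show this forces $\mu$ to be supported on the finite cycle $C := \{z_0, r(z_0), \ldots, r^{N-1}(z_0)\}$. Applying strong invariance to $f = \chi_{\{y\}}$ yields the atom-propagation identity $\mu(\{r(y)\}) = \#r^{-1}(r(y)) \cdot \mu(\{y\})$, so every point of $C$ is a $\mu$-atom. Since $r(C)=C$, the saturation $\hat{C} := \bigcup_{n\geq 0} r^{-n}(C)$ is $r$-invariant and has positive $\mu$-measure, so ergodicity gives $\mu(\hat{C})=1$; combined with the $r$-invariance of $\mu$ (an immediate consequence of strong invariance) and the inclusion $C \subseteq r^{-n}(C)$, a telescoping argument shows $\mu(\hat{C} \setminus C) = 0$, hence $\mu$ is concentrated on $C$.

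The final step is to exploit the rigid structure of a cycle of full measure. Iterating the atom-propagation identity around $C$ with $x_k := r^k(z_0)$ gives $\mu(\{x_0\}) = \left(\prod_{k=0}^{N-1} \#r^{-1}(x_k)\right) \mu(\{x_0\})$, which, since each factor is a positive integer and $\mu(\{x_0\}) > 0$, forces $\#r^{-1}(x_k) = 1$ for every $k$. The QMF relation at $x_{k+1}$ then collapses to $|m_0(x_k)|^2 = 1$, so $|m_0| = 1$ $\mu$-a.e., contradicting the hypothesis of Theorem \ref{th1}. I expect the main obstacle to be the middle paragraph — translating an $r_\infty$-periodic atom of $\mu_\infty$ into a full $r$-cycle support for $\mu$ — which is where ergodicity does the heavy lifting; once the support of $\mu$ has been constrained to a single finite cycle, the QMF collapse is essentially automatic.
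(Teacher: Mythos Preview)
Your proof is correct and follows essentially the same strategy as the paper: reduce to an atom of $\mu_\infty$, extract a $\mu$-atom on an $r$-periodic point, use ergodicity and invariance to force $\mu$ onto a finite cycle, and then let the QMF relation collapse to $|m_0|=1$ $\mu$-a.e. The only differences are local: you obtain the periodicity of $z_0$ directly from the fact that $r_\infty$ (being non-singular by the hypothesis on $m_0$) permutes the finitely many atoms of $A$, whereas the paper instead uses a measure-growth/pigeonhole argument on the forward orbit; and you deduce $\#r^{-1}(x_k)=1$ via your atom-propagation product around the cycle, while the paper argues that all preimages outside $C$ are $\mu$-null so that $C$ is $r$-invariant $\mu$-a.e.
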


\begin{proof}
We reason by contradiction. Suppose $\KDutkaySilv$ is a finite dimensional invariant subspaces. Then, as in remark \ref{rem2.4}, this will correspond to a set $A$ invariant under $r_\infty$, $\KDutkaySilv=L^2(A,\mu_\infty)$. But if $\KDutkaySilv$ is finite dimensional then $A$ must contain only atoms. Let $(z_n)_{n\in\bnDutkaySilv}$ be such an atom.
We have
$$0<\mu_\infty((z_n)_{n\in\bnDutkaySilv})=\mu(z_0)P_{z_0}((z_n)_{n\in\bnDutkaySilv}),$$
so $z_0$ is an atom for $\mu$. Since $\mu$ is strongly invariant for $\mu$, it follows that it is also invariant for $\mu$.
Then $\mu(r(z_0))=\mu(r^{-1}(r(z_0)))\geq \mu(z_0)$. By induction, $\mu(r^{n+1}(z_0))\geq\mu(r^n(z_0))$. Since $\mu(X)<\infty$ and $\mu(z_0)>0$ this implies that
at for some $n\in\bnDutkaySilv$ and $p>0$ we have $r^{n+p}(z_0)=r^n(z_0)$. We relabel $r^n(z_0)$ by $z_0$ so we have $r^p(z_0)=z_0$ and $\mu(z_0)>0$.

Since $\mu$ is invariant for $r$ we have $\mu(z_0)\leq \mu(r^{-p}(z_0))=\mu(z_0)$, and this shows that all the points in $r^{-p}(z_0)$ except $z_0$ have measure $\mu$ zero. The same can be said for $r(z_0), \dots, r^{p-1}(z_0)$. But then $C:=\{z_0,r(z_0),\dots, r^{p-1}(z_0)\}$ is invariant for $r$, $\mu$-a.e., and has positive measure. Since $r$ is ergodic this shows that $C=X$, $\mu$-a.e., and so we can consider that $\# r^{-1}(x)=1$ for $\mu$-a.e. $x\in X$. And then the QMF condition implies that $|m_0|=1$ $\mu$-a.e., which contradicts the assumptions in the hypothesis.
\end{proof}

\section{Examples}

\begin{exampleDutkeySilvestrov}\label{ex1}
Consider the map $r(z)=z^2$ on the unit circle $\btDutkaySilv=\{z\in\bcDutkaySilv : |z|=1\}$. Let $\mu$ be the Haar measure on $\btDutkaySilv$. Let $m_0(z)=\frac{1}{\sqrt{2}}(1+z)$ be the Haar low-pass filter, or any filter that generates an orthonormal scaling function in $L^2(\brDutkaySilv)$ (see \cite{Dau92}). Then the wavelet representation associated to $m_0$ can be realized on the Hilbert space $L^2(\brDutkaySilv)$. The dilation operator is
$$U\xi(x)=\frac{1}{\sqrt{2}}\xi\left(\frac x2\right),\quad(x\in\brDutkaySilv,\xi\in L^2(\brDutkaySilv))$$
The representation $\pi$ of $L^\infty(\btDutkaySilv)$ is constructed by applying Borel functional calculus to the translation operator
$$T\xi(x)=\xi(x-1),\quad(x\in\brDutkaySilv,f\in L^2(\brDutkaySilv)),$$
$$\pi(f)=f(T),\quad(f\in L^\infty(\brDutkaySilv)),$$
in particular
$$\pi\left(\sum_{k\in\bzDutkaySilv}a_kz^k\right)=\sum_{k\in\bzDutkaySilv}a_kT^k,$$
for any finitely supported sequence of complex numbers $(a_k)_{k\in\bzDutkaySilv}$.

The Fourier transform of the scaling function is given by an infinite product (\cite{Dau92}):
$$\widehat\varphi(x)=\prod_{n=1}^\infty m_0\left(\frac x{2^n}\right),\quad(x\in\brDutkaySilv).$$

The commutant of this wavelet representation can be explicitely computed (see \cite{HaLa00}): let $\mathcal F$ be the Fourier transform. An operator $A$ is in the commutant $\{U,\pi\}'$ of the wavelet representation if and only if its Fourier transform $\widehat A:=\mathcal F A\mathcal F^{-1}$ is a multiplication operator by a bounded, dilation invariant function, i.e., $\widehat A=M_f$, with $f\in L^\infty(\brDutkaySilv)$, $f(2x)=f(x)$, for a.e. $x\in\brDutkaySilv$. Here
$$M_f\xi=f\xi,\quad(\xi\in L^2(\brDutkaySilv)).$$

Thus, invariant subspaces correspond, through the Fourier transform, to sets which are invariant under dilation by 2.

The measure $\mu_\infty$ on the solenoid $\btDutkaySilv_\infty$ can also be computed, see \cite{Dut06}. It is supported on the embedding of $\brDutkaySilv$ in the solenoid $\btDutkaySilv_\infty$. The path measures $P_x$ are in this case atomic.

The direct integral decomposition of the wavelet representation was described \cite{LPT01}.

For the low-pass filters that generate non-orthogonal scaling function, such as the stretched Haar filter $m_0(z)=\frac{1}{\sqrt{2}}(1+z^3)$, the wavelet representation can be realized in a finite sum of copies of $L^2(\brDutkaySilv)$. These filters correspond to super-wavelets, and the computation of the commutant, of the measure $\mu_\infty$ and the direct integral decomposition of the wavelet representation can be found in \cite{BDP05, Dut06}.

\end{exampleDutkeySilvestrov}

\begin{exampleDutkeySilvestrov}\label{ex2}
Let $r(z)=z^N$, $N\in\bnDutkaySilv$, $N\geq 2$ on the unit circle $\btDutkaySilv$ and let $m_0(z)=1$ for all $z\in\btDutkaySilv$. In this case (see \cite{Dut06}) the wavelet representation can be realized on the solenoid $\btDutkaySilv_\infty$ and the measure $\mu_\infty$ is just the Haar measure on the solenoid $\btDutkaySilv_\infty$, and the operators $U$, $\pi$ are defined above in the proof of Theorem \ref{th1}. For this particular wavelet representation the commutant is trivial, so the representation is {\it irreducible}. It is interesting to see that, by Theorem \ref{th1}, just any small perturbation of the constant function $m_0=1$ will generate a {\it reducible} wavelet representation.
\end{exampleDutkeySilvestrov}

\begin{exampleDutkeySilvestrov}\label{ex3}
We turn now to the example in Judy Packer's question. $r(z)=z^3$ on $\btDutkaySilv$ with the Haar measure, and $m_0(z)=\frac{1}{\sqrt{2}}(1+z^2)$. As we explained in the introduction, this low-pass filter generates a wavelet representation involving the middle third Cantor set. See \cite{DuJo06b} for details. We know that $r(z)=z^3$ is an ergodic map and it is easy to see that the function $m_0$ satisfies the hypotheses of Theorem \ref{th1}. Actually, an application of Jensen's formula to the analytic function $m_0^2$ shows that
$$\int_{\btDutkaySilv}\log|m_0|^2\,d\mu=-2\pi \log 2.$$

Thus, by Theorem \ref{th1}, it follows that this wavelet representation is reducible. However, the problem of {\it constructing} the operators in the commutant of the wavelet representation remains open for analysis.
\end{exampleDutkeySilvestrov}

\newcommand{\etalchar}[1]{$^{#1}$}
\def\cprime{$'$}

\end{document}